\documentclass[11pt]{article}

\title{Order dimension beyond rank for simplicial hyperplane arrangements}
\author{Daria Poliakova}
\date{}

\usepackage[a4paper,margin=2.8cm]{geometry}
\usepackage{amsmath,amssymb,amsthm}
\usepackage{mathtools}
\usepackage{graphicx}
\usepackage{enumitem}
\usepackage[hidelinks]{hyperref}

\usepackage{tikz}
\usetikzlibrary{arrows.meta,calc,positioning}

\setlist[itemize]{leftmargin=2em}
\setlist[enumerate]{leftmargin=2.2em}

\usepackage{accents}
\newlength{\dhatheight}

\newtheorem{theorem}{Theorem}[section]

\theoremstyle{definition}

\begin{document}

\maketitle

\begin{abstract}
We show that the order dimension of the poset of regions in a simplicial hyperplane arrangement can exceed its rank, answering a question of Reading and Segovia. Examples are Coxeter arrangements \(H_4\) and \(E_6\), with \( \dim W(H_4) \geq 5\) and \( \dim W(E_6) \geq 7\). 
\end{abstract}

\section*{Introduction}

We show that the order dimension of the poset of regions of a simplicial hyperplane arrangement can exceed its rank: the weak orders of types \(H_4\) and \(E_6\) satisfy \(\dim W(H_4)\geq 5>4\) and \(\dim W(E_6)\geq 7>6\). The second example shows that this phenomenon occurs already for crystallographic arrangements.

Let \(\mathcal A\) be a finite central essential hyperplane arrangement in \(\mathbb R^n\), and let \(B\) be a region. For a region \(R\), let \(S_B(R)\) be the set of hyperplanes separating \(R\) from \(B\). The poset of regions \(P(\mathcal A,B)\) is defined by \(R\leq R'\) if \(S_B(R)\subseteq S_B(R')\). Its Hasse diagram is the adjacency graph of the regions, oriented away from \(B\). Equivalently, \(P(\mathcal A,B)\) is the vertex poset obtained from a suitable linear-functional orientation of the \(1\)-skeleton of the zonotope associated with \(\mathcal A\) \cite{Reading2003}. If \(\mathcal A\) is simplicial, this zonotope is simple and \(P(\mathcal A,B)\) is a lattice for every choice of \(B\) \cite{BjornerEdelmanZiegler1990}.

A realizer of a finite poset \(P\) is a collection of linear extensions whose intersection is \(P\), and the order dimension \(\dim P\) is the minimum size of a realizer. Equivalently, it is the least \(d\) such that \(P\) embeds into \(\mathbb R^d\) with the coordinatewise order \cite{DushnikMiller1941}. If \(B\) has \(k\) walls, the corresponding atoms and their antipodal coatoms form the standard example \(S_k\), so \(\dim P(\mathcal A,B)\geq k\). In particular, \(\dim P(\mathcal A,B)\geq n\) when \(\mathcal A\) is simplicial of rank \(n\) \cite[Proposition~3.1]{Reading2003}.

For the reflection arrangement of a finite Coxeter group, the poset of regions is its weak order. Reading proved equality between dimension and rank in every irreducible finite type except \(E_6,E_7,E_8,F_4\), and \(H_4\), for which he obtained bounds. He observed that an exceptional type whose dimension exceeds its rank would give the first known simplicial arrangement with this property \cite[Section~8]{Reading2003}, and later recorded the general guess that every simplicial region poset has dimension equal to its rank \cite[Problem~9.3]{Reading2016}. More recently, Segovia asked the analogous question for lattices arising from linear-functional orientations of polytopal \(1\)-skeleta \cite[Question~5.5]{Segovia2025}.

We answer this question negatively by proving \(\dim W(H_4)\geq 5\) and \(\dim W(E_6)\geq 7\). For ordered incomparable pairs \(p_i=(x_i,y_i)\), join \(p_i\) and \(p_j\) when \(x_i\leq y_j\) and \(x_j\leq y_i\). No linear extension can reverse two adjacent pairs, so every realizer induces a proper coloring of this incompatibility graph. Consequently, its chromatic number is a lower bound for \(\dim P\); this is the usual alternating-\(2\)-cycle argument \cite{FelsnerTrotter2000}. For each of \(H_4\) and \(E_6\), we exhibit fifteen incomparable pairs whose incompatibility graph has chromatic number at least \(5\) and \(7\), respectively. All necessary weak-order comparisons and both short noncolorability arguments are included, so the counterexamples can be checked by hand (even though computations of the inversion sets are unpleasant and the author honestly does not recommend them).

Since simplicial region lattices are semidistributive \cite{ReadingLattice2003}, our examples also answer the unrestricted and semidistributive versions of Segovia's question negatively, even for simple zonotopes. They do not address the extremal version.

Computer calculations show in fact that \(\dim W(H_4)=5 \), \(\dim W(E_6)=8\), \(10\leq\dim W(E_7)\leq11 \), and additionally \(\dim W(F_4)=4 \). The corresponding obstruction subgraphs and realizers are large, and since no conceptual pattern emerges from them, we have chosen not to include them in the present note. We would, however, be happy to share the code upon request.

Our results raise two natural questions.

{\bf Question 1.} Which geometric, combinatorial, or lattice-theoretic conditions on \((\mathcal A,B)\) guarantee \(\dim P(\mathcal A,B)=\operatorname{rk}(\mathcal A)\)?

{\bf Question 2.} Let \(f(n)\) be the supremum of \(\dim P(\mathcal A,B)\) over finite central simplicial arrangements of rank \(n\) and all choices of \(B\). Is \(f(n)\) finite, and, if so, what is its asymptotic growth? 

\paragraph{AI use declaration.} The small obstruction subgraphs were found by ChatGPT 5.6 Sol Ultra. The human input was the belief that the rank guess is incorrect, and one should look for counterexamples.

\paragraph{Acknowledgements and funding.} I am grateful to Andrii Bondarenko for encouraging me to discuss this question with ChatGPT. I was funded by Deutsche Forschungsgemeinschaft via SFB 1624.

\section{Preliminaries}
\subsection{Coxeter arrangements and weak order}
\label{subsec:coxeter}

Let \((W,S)\) be a finite Coxeter system of rank \(n\), with \(S=\{s_1,\ldots,s_n\}\), simple roots \(\Delta=\{\alpha_1,\ldots,\alpha_n\}\), and positive roots \(\Phi^+\). Its Coxeter arrangement is \(\mathcal A_W=\{\alpha^\perp:\alpha\in\Phi^+\}\). Let \(B\) be the fundamental region, whose interior is \(\{v:\langle v,\alpha_i\rangle>0\text{ for all }i\}\). The regions of \(\mathcal A_W\) are the \(wB\), for \(w\in W\).

Set \(\Phi^-=-\Phi^+\) and define \(\operatorname{Inv}(w)=\{\alpha\in\Phi^+:w^{-1}(\alpha)\in\Phi^-\}\). The hyperplanes separating \(wB\) from \(B\) are precisely those indexed by \(\operatorname{Inv}(w)\). Thus \(wB\mapsto w\) identifies \(P(\mathcal A_W,B)\) with the right weak order: \(u\leq_R v\) if \(v=uq\) and \(\ell(v)=\ell(u)+\ell(q)\), where \(\ell\) denotes Coxeter length, or equivalently if \(\operatorname{Inv}(u)\subseteq\operatorname{Inv}(v)\). We write \(W(X)\) for the weak order of type \(X\) \cite[Section~5]{Reading2003}\cite[Chapter~3]{BjornerBrenti2005}.

Let \(m_{ij}\) be the order of \(s_is_j\). In type \(H_4\), \(m_{12}=5\) and \(m_{23}=m_{34}=3\); in type \(E_6\), \(m_{13}=m_{34}=m_{24}=m_{45}=m_{56}=3\). All unlisted \(m_{ij}\) equal \(2\). We abbreviate \(s_{i_1}\cdots s_{i_k}\) to \(i_1\cdots i_k\). If \(\ell(ws_i)=\ell(w)+1\), then \(\operatorname{Inv}(ws_i)=\operatorname{Inv}(w)\cup\{w(\alpha_i)\}\), which computes inversion sets recursively from reduced words \cite[Proposition~2.1]{HohlwegLabbe2015}. We write roots in simple-root coordinates: \([c_1,\ldots,c_n]=\sum_i c_i\alpha_i\).

Let \(w_0\) be the longest element and put \(\omega=-w_0\), regarded as a linear map on the root space. Then \(\omega\) permutes \(\Phi^+\) and \(\operatorname{Inv}(w_0z)=\Phi^+\setminus\omega(\operatorname{Inv}(z))\). Consequently,
\begin{equation}
 x\leq w_0z
 \quad\Longleftrightarrow\quad
 \operatorname{Inv}(x)\cap\omega(\operatorname{Inv}(z))=\varnothing.
 \label{eq:comparison}
\end{equation}
Indeed, for \(\alpha\in\Phi^+\), one has \((w_0z)^{-1}\alpha=-z^{-1}\omega(\alpha)\), so \(\alpha\in\operatorname{Inv}(w_0z)\) precisely when \(\omega(\alpha)\notin\operatorname{Inv}(z)\). In type \(H_4\), \(\omega\) is the identity. In type \(E_6\), it interchanges \(\alpha_1\leftrightarrow\alpha_6\) and \(\alpha_3\leftrightarrow\alpha_5\), while fixing \(\alpha_2\) and \(\alpha_4\) \cite[Appendix~A1]{BjornerBrenti2005}. Thus all comparisons below reduce to disjointness checks between finite sets of positive roots.

\subsection{Incomparable pairs and graph coloring}
\label{subsec:critical-pairs}

For a finite poset \(P\), let \(\operatorname{Inc}(P)=\{(x,y):x\parallel y\}\), where pairs are ordered. A linear extension \(L\) reverses \((x,y)\) if \(y<_Lx\), and \(I\subseteq\operatorname{Inc}(P)\) is reversible if one linear extension reverses every pair in \(I\). An alternating cycle is a cyclic sequence \((x_i,y_i)_{i=1}^k\), with \(k\geq2\), such that \(x_i\leq y_{i+1}\), where indices are taken modulo \(k\). A set is reversible if and only if it contains no alternating cycle, and \(\dim P\) is the least number of such sets covering \(\operatorname{Inc}(P)\) \cite[Theorem~2.3]{FelsnerTrotter2000}.

For \(I\subseteq\operatorname{Inc}(P)\), define the incompatibility graph \(\Gamma_P(I)\) on \(I\) by joining \((x_i,y_i)\) and \((x_j,y_j)\) when \(x_i\leq y_j\) and \(x_j\leq y_i\). Adjacent pairs form an alternating \(2\)-cycle and cannot be reversed by the same linear extension. Hence
\begin{equation}
 \chi(\Gamma_P(I))\leq\dim P.
 \label{eq:incompatibility-bound}
\end{equation}
The converse need not hold even for \(I = \operatorname{Inc}(P)\): an independent set may contain an alternating cycle of length at least \(3\). Thus graph noncolorability certifies lower bounds, whereas upper bounds require reversible classes or actual linear extensions \cite[Lemma~3.3 and Section~4]{FelsnerTrotter2000}.

The computer search leading to the obstruction subgraphs presented below was in fact not over full \(\operatorname{Inc}(P) \) but over a much smaller subset \( \operatorname{Crit}(P) \) of critical pairs, which does not change the dimension; the proofs below, however, use only that the displayed pairs are incomparable \cite[Proposition~3.2]{FelsnerTrotter2000}, \cite[Proposition~3.3]{Reading2003}.

\section{The \(H_4\) lower bound}
\label{sec:h4-lower}

\begin{theorem}
\label{thm:h4-lower}
\(\dim W(H_4)\geq5\).
\end{theorem}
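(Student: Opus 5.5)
We will use the incompatibility-graph bound \eqref{eq:incompatibility-bound}: it suffices to exhibit a set \(I\) of incomparable pairs in \(W(H_4)\) with \(\chi(\Gamma_{W(H_4)}(I))\geq 5\). Following the introduction, we aim for fifteen pairs \(p_i=(x_i,y_i)\). The natural source is the critical pairs. By \cite[Proposition~3.3]{Reading2003}, in a weak order these have the form \(x\) join-irreducible and \(y\) meet-irreducible. Concretely, \(x\) will be an element with a unique descent and \(y=w_0z\) with \(z\) having a unique descent. We fix these elements by explicit reduced words in the generators \(1,2,3,4\).

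For each chosen element we compute its inversion set recursively, using \(\operatorname{Inv}(ws_i)=\operatorname{Inv}(w)\cup\{w(\alpha_i)\}\) with roots written in simple-root coordinates over \(\mathbb Z[\tau]\), \(\tau=(1+\sqrt5)/2\). Since \(\omega\) is the identity in type \(H_4\), \eqref{eq:comparison} turns every comparison \(x_i\leq y_j=w_0z_j\) into the disjointness check \(\operatorname{Inv}(x_i)\cap\operatorname{Inv}(z_j)=\varnothing\). Thus the needed facts reduce to two kinds of checks:
\begin{itemize}
\item incomparability of each \((x_i,y_i)\), i.e. \(\operatorname{Inv}(x_i)\cap\operatorname{Inv}(z_i)\neq\varnothing\) together with \(y_i\not\leq x_i\), which follows from length or inversion considerations;
\item for each edge of the graph, the two disjointness conditions \(\operatorname{Inv}(x_i)\cap\operatorname{Inv}(z_j)=\varnothing\) and \(\operatorname{Inv}(x_j)\cap\operatorname{Inv}(z_i)=\varnothing\).
\end{itemize}
We will tabulate the inversion sets and list the edges; the underlying graph was found by computer.

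It then remains to show the graph \(\Gamma\) on fifteen vertices is not \(4\)-colorable. The plan is a short case argument. First locate a \(K_4\), or a wheel-like dense core, whose colors are forced up to permutation. Then propagate: each further vertex adjacent to three differently colored core vertices has its color forced. Continue until some vertex is adjacent to all four colors. Where propagation branches, split into at most two or three cases, each closed by the same mechanism. An odd wheel or a Mycielski-type subgraph would also serve as a certificate of \(\chi\geq5\).

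The main obstacle is not the coloring argument but finding and verifying the pairs. We must choose fifteen join/meet-irreducible pairs whose incompatibility graph has chromatic number \(5\), knowing that the standard example alone gives only \(4\). We must also carry out the root computations in \(H_4\), where the roots have golden-ratio coefficients and some elements have long reduced words. Our first attempt would be a computer search over critical pairs for a small \(5\)-chromatic subgraph of \(\Gamma(\operatorname{Crit})\), minimized by greedy deletion of vertices. We would then present its inversion sets explicitly so that every disjointness check can be verified by hand.
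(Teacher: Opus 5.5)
Your framework is the same as the paper's. You use pairs $p_i=(x_i,w_0z_i)$, with incomparability coming from $\operatorname{Inv}(x_i)\cap\operatorname{Inv}(z_i)\neq\varnothing$ plus a length count. Edges come from the two disjointness checks in \eqref{eq:comparison} with $\omega=\mathrm{id}$, and non-colorability from a forced-coloring argument grown from a $K_4$. The gap is that you never produce the object that constitutes the proof.

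The whole content of the theorem is that some explicit $I$ has $\chi(\Gamma_{W(H_4)}(I))\geq5$, and nothing in your plan guarantees this. The bound \eqref{eq:incompatibility-bound} can be strict, because dimension can be forced by alternating cycles of length at least $3$ that the incompatibility graph does not see. So a priori your proposed search over critical pairs might find no $5$-chromatic subgraph at all. Sentences such as ``we will tabulate the inversion sets and list the edges'' and ``our first attempt would be a computer search'' describe how one might find a proof; they are not a proof.

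Concretely, what is missing is what the paper supplies:
\begin{itemize}
\item fifteen explicit reduced words $x_i,z_i$ (some of length over thirty), together with the verified intersections $\operatorname{Inv}(x_i)\cap\operatorname{Inv}(z_i)=\{\beta_i\}$;
\item an explicit graph $G$ in which $p_i=(s_i,w_0s_i)$ for $i\leq4$ (the standard example) span a $K_4$;
\item the adjacency rule that each later $p_i$ is joined to all but two or three of $p_1,\dots,p_4$, so its color is confined to a short list;
\item fourteen further edges among $p_5,\dots,p_{15}$, each checked by two disjointness computations;
\item a two-case analysis on the color of $p_6$, in which forced colors propagate to a monochromatic edge.
\end{itemize}
Without these data there is nothing to verify.

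Separately, one of your fallback certificates is wrong. An odd wheel is $4$-chromatic, so it cannot certify $\chi\geq5$. You would need something like the join of $K_2$ with an odd cycle, or the list-forcing structure around a $K_4$ used in the paper.
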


\begin{proof}
The root system of type \(H_4\) has \(60\) positive roots, and hence
\(\ell(w_0)=60\)
\cite[Theorem~7.1.5 and Appendix~A1]{BjornerBrenti2005}.
Put \(\varphi=(1+\sqrt5)/2\), and normalize the simple roots by
\(\langle\alpha_i,\alpha_i\rangle=2\),
\(\langle\alpha_1,\alpha_2\rangle=-\varphi\), and
\(\langle\alpha_2,\alpha_3\rangle
=\langle\alpha_3,\alpha_4\rangle=-1\), with all other off-diagonal
inner products zero. The
opposition map \(\omega=-w_0\) is the identity in type \(H_4\).

For the words in Table~\ref{tab:h4-pairs}, put
\(p_i=(x_i,w_0z_i)\). By performing the tedious yet straightforward
inversion-set recursion of Section~\ref{subsec:coxeter}, the reader can
verify that all displayed words are reduced and that \( \operatorname{Inv}(x_i)\cap\operatorname{Inv}(z_i)=\{\beta_i\} \).

\begin{table}[ht]
\centering
\scriptsize
\setlength{\tabcolsep}{2pt}
\begin{tabular}{rlll}
\hline
\(i\) & \(x_i\) & \(z_i\) & \(\beta_i\)\\
\hline
1  & \(1\)                    & \(1\)                                      & \([1,0,0,0]\)\\
2  & \(2\)                    & \(2\)                                      & \([0,1,0,0]\)\\
3  & \(3\)                    & \(3\)                                      & \([0,0,1,0]\)\\
4  & \(4\)                    & \(4\)                                      & \([0,0,0,1]\)\\
5  & \(23\)                   & \(32\)                                     & \([0,1,1,0]\)\\
6  & \(212\)                  & \(121\)                                    & \([\varphi,\varphi,0,0]\)\\
7  & \(32121\)                & \(123\)                                    & \([\varphi,1,1,0]\)\\
8  & \(121234\)               & \(43212132123\)                            & \([\varphi^2,\varphi^2,1,1]\)\\
9  & \(432121\)               & \(1234\)                                   & \([\varphi,1,1,1]\)\\
10 & \(432123\)               & \(21213212432121321\)                      & \([\varphi,\varphi^2,\varphi,\varphi]\)\\
11 & \(21321234\)             & \(43212132123432121321234321213212432121\) & \([\varphi^2,\varphi^3,\varphi^2,1]\)\\
12 & \(3212132143212\)        & \(43212132143212\)                         & \([\varphi^2,2\varphi,2\varphi,\varphi]\)\\
13 & \(121232121321432123\)   & \(3432121321234321213212343212132123\)     & \([2+2\varphi,1+3\varphi,1+2\varphi,\varphi]\)\\
14 & \(321243212132143212\)   & \(212132123432121321234321213212\)         & \([1+2\varphi,1+3\varphi,1+2\varphi,\varphi]\)\\
15 & \(21321213212432121321\) & \(4321213212432121\)                       & \([\varphi^2,\varphi^3,\varphi^2,\varphi]\)\\
\hline
\end{tabular}
\caption{The words defining the pairs \(p_i=(x_i,w_0z_i)\).}
\label{tab:h4-pairs}
\end{table}

Since
\(\operatorname{Inv}(x_i)\cap\operatorname{Inv}(z_i)\neq\varnothing\),
equation~\eqref{eq:comparison} gives \(x_i\nleq w_0z_i\). Inspection
of the displayed reduced words shows that \( \ell(w_0z_i)=\ell(w_0)-\ell(z_i)>\ell(x_i) \), so \(w_0z_i\nleq x_i\). Thus every \(p_i\) is an ordered incomparable
pair.

We now define a graph \(G\) on these pairs. The vertices
\(p_1,p_2,p_3,p_4\) span a \(K_4\). For \(i\geq5\), join \(p_i\) to
\(p_j\) for every \( j\in\bigl(\{1,2,3,4\}\setminus L_i\bigr)\cup N_i \),
where \(L_i\) and \(N_i\) are given in Table 2.

\begin{table}[ht]
\centering
\begin{tabular}{rll@{\qquad}rll}
\hline
\(i\) & \(L_i\) & \(N_i\) & \(i\) & \(L_i\) & \(N_i\)\\
\hline
5  & \(\{2,3\}\)   & \(\{6,13\}\)       & 11 & \(\{2,4\}\)   & \(\{13\}\)\\
6  & \(\{1,2\}\)   & \(\{7,9,11,15\}\) & 12 & \(\{3,4\}\)   & \(\{15\}\)\\
7  & \(\{1,3\}\)   & \(\{14\}\)        & 13 & \(\{1,3,4\}\) & \(\varnothing\)\\
8  & \(\{1,4\}\)   & \(\{12,13,15\}\)  & 14 & \(\{2,3\}\)   & \(\varnothing\)\\
9  & \(\{1,4\}\)   & \(\{10\}\)        & 15 & \(\{2,3,4\}\) & \(\varnothing\)\\
10 & \(\{2,4\}\)   & \(\{14\}\)        &    &                   & \\
\hline
\end{tabular}
\caption{The sets \(L_i\) and \(N_i\) defining \(G\).}
\label{tab:h4-graph}
\end{table}

By comparing the inversion sets obtained from the same recursion, the reader
can verify that, for every edge \(p_ip_j\) of \(G\), \( \operatorname{Inv}(x_i)\cap\operatorname{Inv}(z_j)
 =\operatorname{Inv}(x_j)\cap\operatorname{Inv}(z_i)
 =\varnothing \). 
Equation~\eqref{eq:comparison} therefore gives
\(x_i\leq w_0z_j\) and \(x_j\leq w_0z_i\), so \(p_i,p_j\) form an
alternating \(2\)-cycle. Thus \( G\subseteq
 \Gamma_{W(H_4)}(\{p_1,\ldots,p_{15}\}) \).

Suppose that \(G\) has a \(4\)-coloring, where
\(p_1,p_2,p_3,p_4\) have colors \(A,B,C,D\), respectively. By the
definition of \(L_i\), the color of \(p_i\), for \(i\geq5\), must be
the color of some \(p_j\) with \(j\in L_i\). In particular, \(p_6\)
has color \(A\) or \(B\).

If \(p_6\) has color \(A\), the edges
\(p_6p_7,p_6p_9,p_9p_{10},p_{10}p_{14}\) successively force
\(p_7,p_9,p_{10},p_{14}\) to have colors \(C,D,B,C\), contradicting
the edge \(p_7p_{14}\).

If \(p_6\) has color \(B\), the edges \(p_5p_6\) and \(p_6p_{11}\)
force \(p_5\) and \(p_{11}\) to have colors \(C\) and \(D\). Since
both are adjacent to \(p_{13}\), the vertex \(p_{13}\) has color
\(A\); the edges \(p_{13}p_8\) and \(p_8p_{12}\) then force
\(p_8,p_{12}\) to have colors \(D,C\). Finally, since \(p_{15}\) is
adjacent to both \(p_6\) and \(p_8\), it has color \(C\), contradicting
the edge \(p_{12}p_{15}\).

Hence \(\chi(G)\geq5\). Equation~\eqref{eq:incompatibility-bound}
now gives
\[
 5\leq\chi(G)\leq\dim W(H_4).
\]
\end{proof}

\section{The \(E_6\) lower bound}
\label{sec:e6-lower}

\begin{theorem}
\label{thm:e6-lower}
\(\dim W(E_6)\geq7\).
\end{theorem}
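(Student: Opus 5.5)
The plan mirrors the proof of Theorem~\ref{thm:h4-lower}. I will exhibit fifteen ordered pairs \(p_i=(x_i,w_0z_i)\) in \(W(E_6)\) and a graph \(G\) on them with \(G\subseteq\Gamma_{W(E_6)}(\{p_1,\ldots,p_{15}\})\) and \(\chi(G)\geq7\). The bound then follows from \eqref{eq:incompatibility-bound}.

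The first six pairs will be \(p_i=(s_i,w_0s_{\omega(i)})\), where \(\omega\) is the diagram involution \(1\leftrightarrow6\), \(3\leftrightarrow5\), fixing \(2\) and \(4\).

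\emph{Incomparability.} For \(i\neq j\), \(\operatorname{Inv}(s_i)\cap\omega(\operatorname{Inv}(s_{\omega(j)}))=\{\alpha_i\}\cap\{\alpha_j\}=\varnothing\), so \eqref{eq:comparison} gives \(s_i\leq w_0s_{\omega(j)}\). For \(i=j\) the intersection is nonempty, so \(x_i\nleq w_0z_i\).

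\emph{The standard example.} These comparisons show that \(p_1,\ldots,p_6\) span a \(K_6\). This is just \(S_6\) viewed as an incompatibility graph.

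\emph{The remaining nine pairs.} For \(i\geq7\) I choose \(x_i,z_i\) with \(\operatorname{Inv}(x_i)\cap\omega(\operatorname{Inv}(z_i))=\{\beta_i\}\), a single root. To ensure \(w_0z_i\nleq x_i\), I also require \(\ell(z_i)+\ell(x_i)<36=|\Phi^+|\).

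\emph{Adjacency to the \(K_6\).} The pair \(p_i\) is adjacent to \(p_j\), \(j\leq6\), exactly when \(\alpha_j\notin\operatorname{Inv}(z_i)\) (after applying \(\omega\)) and \(\alpha_j\notin\operatorname{Inv}(x_i)\). That is, \(j\) lies in neither the left nor the right descent set, up to \(\omega\). So \(L_i\) is essentially the union of the right descents of \(x_i\) and \(\omega\) of the right descents of \(z_i\). I want each \(L_i\) small, typically two or three elements, so that \(p_i\) is forced into one of few colours.

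\emph{Adjacencies among the new pairs.} Two new pairs \(p_i,p_j\) are adjacent when both cross-intersections \(\operatorname{Inv}(x_i)\cap\omega(\operatorname{Inv}(z_j))\) and \(\operatorname{Inv}(x_j)\cap\omega(\operatorname{Inv}(z_i))\) are empty.

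\emph{Finding the words.} I would do this by computer search over critical pairs, as indicated in Section~\ref{subsec:critical-pairs}. The search keeps pairs with small \(L_i\) and a dense graph among them. All inversion sets are computed by the recursion \(\operatorname{Inv}(ws_i)=\operatorname{Inv}(w)\cup\{w(\alpha_i)\}\) in simple-root coordinates. This is routine but lengthy, and the paper would only tabulate the words, the roots \(\beta_i\), the sets \(L_i\) and the lists \(N_i\).

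\emph{Non-colourability.} Suppose \(G\) had a \(6\)-colouring with \(p_1,\ldots,p_6\) coloured \(A,\ldots,F\). Each \(p_i\) for \(i\geq7\) must take a colour from \(L_i\). I then pick a vertex whose \(L_i\) has two elements and split into those two cases. In each case I propagate forced colours along edges between new pairs, as in the \(H_4\) argument, until some vertex has all colours in its \(L\)-set excluded.

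\emph{Main obstacle.} The hard part is finding such a configuration: nine extra pairs whose forced-colour constraints close up into a contradiction in every branch. Jumping from the trivial bound \(6\) to \(7\) needs enough interlocking short propagation chains. I expect the case analysis to need perhaps one more level of branching than in \(H_4\), since the \(L_i\) may have three elements. If the nine-vertex extension proves too sparse, I would first try enlarging the candidate pool to pairs with \(|L_i|=2\) only, then solve the resulting colouring problem with a SAT solver to extract a minimal obstruction.
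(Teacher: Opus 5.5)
\textbf{There is a genuine gap: the certificate is missing.} Your set-up matches the paper's: the six pairs \((s_i,w_0s_{\omega(i)})\) spanning a \(K_6\), comparisons reduced to disjointness of \(\operatorname{Inv}(x)\) and \(\omega(\operatorname{Inv}(z))\), and a forced-colour argument through the sets \(L_i\). But everything you actually verify yields only \(\dim W(E_6)\geq 6\), which the standard example already gives. The whole content of the theorem is the existence of further pairs and a subgraph of the incompatibility graph with chromatic number \(7\). This is exactly what you leave to an unperformed computer search and call the ``main obstacle''.

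Nothing in your argument shows such a configuration exists. The incompatibility-graph bound can be strictly weaker than the dimension, so the method could stall at \(6\). Your fallbacks (restricting to \(|L_i|=2\), SAT solving) are again plans, not certificates. A proof must exhibit explicit words \(x_i,z_i\), check each incomparability, check every claimed edge by disjointness of inversion sets, and carry out the colouring contradiction.

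\textbf{What the paper supplies.} The paper gives this finite certificate, and it is simpler than you expect. It takes \(p_7=(65431,w_0\cdot 65431)\) with \(L_7=\{1,6\}\). Then \(p_8,\dots,p_{11}\) come from \((x_i,z_i)=(2431,6542)\), \((2456,1342)\), \((3456,1345)\), \((3425431,65423456)\), with \(L\)-sets \(\{1,2\},\{2,6\},\{3,6\},\{1,3\}\); these form a \(5\)-cycle \(p_7p_8p_9p_{10}p_{11}\). Finally \(p_{12},\dots,p_{15}\) are obtained by swapping \(x_i\) and \(z_i\), which transports everything by \(\omega\) and gives a second \(5\)-cycle through \(p_7\). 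All \(L_i\) have size two, so \(p_7\) must be coloured \(A\) or \(F\). Each choice propagates around one cycle back to a clash with \(p_7\), so no deeper branching is needed.

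\textbf{A left/right slip.} With \(\operatorname{Inv}(w)=\{\alpha\in\Phi^+:w^{-1}\alpha\in\Phi^-\}\), one has \(\alpha_j\in\operatorname{Inv}(w)\) if and only if \(\ell(s_jw)<\ell(w)\). So \(L_i\) is the set of \emph{left} descents of \(x_i\) together with \(\omega\) of the \emph{left} descents of \(z_i\), not right descents. For \(x=3425431\), \(z=65423456\) the correct set is \(\{3\}\cup\{1\}=\{1,3\}\), whereas right descents would give only \(\{1\}\). A search that filters candidates by right descents would therefore compute wrong \(L\)-sets.
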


\begin{proof}
The root system of type \(E_6\) has \(36\) positive roots, and hence
\(\ell(w_0)=36\)
\cite[Theorem~7.1.5 and Appendix~A1]{BjornerBrenti2005}.
The opposition map \(\omega=-w_0\) is nontrivial in type \(E_6\). In
simple-root coordinates, \( \omega([c_1,c_2,c_3,c_4,c_5,c_6])
 =[c_6,c_2,c_5,c_4,c_3,c_1] \).

For the words in Table~\ref{tab:e6-pairs}, put \(p_i=(x_i,w_0z_i)\). Just in the previous section, but with the nontrivial opposition map \(\omega \), all displayed words are reduced, and \( \operatorname{Inv}(x_i)\cap\omega(\operatorname{Inv}(z_i))={\beta_i} \), and \( \ell(w_0z_i)>\ell(x_i) \), hence every \(p_i \) is an ordered incomparable pair.

\begin{table}[ht]
\centering
\scriptsize
\setlength{\tabcolsep}{3pt}
\begin{tabular}{rlll}
\hline
\(i\) & \(x_i\) & \(z_i\) & \(\beta_i\)\\
\hline
1  & \(1\)        & \(6\)        & \([1,0,0,0,0,0]\)\\
2  & \(2\)        & \(2\)        & \([0,1,0,0,0,0]\)\\
3  & \(3\)        & \(5\)        & \([0,0,1,0,0,0]\)\\
4  & \(4\)        & \(4\)        & \([0,0,0,1,0,0]\)\\
5  & \(5\)        & \(3\)        & \([0,0,0,0,1,0]\)\\
6  & \(6\)        & \(1\)        & \([0,0,0,0,0,1]\)\\
7  & \(65431\)    & \(65431\)    & \([1,0,1,1,1,1]\)\\
8  & \(2431\)     & \(6542\)     & \([1,1,1,1,0,0]\)\\
9  & \(2456\)     & \(1342\)     & \([0,1,0,1,1,1]\)\\
10 & \(3456\)     & \(1345\)     & \([0,0,1,1,1,1]\)\\
11 & \(3425431\)  & \(65423456\) & \([1,1,2,2,1,0]\)\\
12 & \(6542\)     & \(2431\)     & \([0,1,0,1,1,1]\)\\
13 & \(1342\)     & \(2456\)     & \([1,1,1,1,0,0]\)\\
14 & \(1345\)     & \(3456\)     & \([1,0,1,1,1,0]\)\\
15 & \(65423456\) & \(3425431\)  & \([0,1,1,2,2,1]\)\\
\hline
\end{tabular}
\caption{The words defining the pairs \(p_i=(x_i,w_0z_i)\).}
\label{tab:e6-pairs}
\end{table}

We now define a graph \(G\) on these pairs. The vertices
\(p_1,p_2,p_3,p_4,p_5,p_6\) span a \(K_6\). For \(i\geq7\), join \(p_i\) to
\(p_j\) for every \( j\in\bigl(\{1,2,3,4,5,6\}\setminus L_i\bigr)\cup N_i \),
where \(L_i\) and \(N_i\) are given in Table 4.

\begin{table}[ht]
\centering
\begin{tabular}{rll@{\qquad}rll}
\hline
\(i\) & \(L_i\) & \(N_i\) & \(i\) & \(L_i\) & \(N_i\)\\
\hline
7  & \(\{1,6\}\) & \(\{8,11,12,15\}\) & 12 & \(\{2,6\}\) & \(\{13\}\)\\
8  & \(\{1,2\}\) & \(\{9\}\)          & 13 & \(\{1,2\}\) & \(\{14\}\)\\
9  & \(\{2,6\}\) & \(\{10\}\)         & 14 & \(\{1,5\}\) & \(\{15\}\)\\
10 & \(\{3,6\}\) & \(\{11\}\)         & 15 & \(\{5,6\}\) & \(\varnothing\)\\
11 & \(\{1,3\}\) & \(\varnothing\)     &    &             & \\
\hline
\end{tabular}
\caption{The sets \(L_i\) and \(N_i\) defining \(G\).}
\label{tab:e6-graph}
\end{table}

The existence of these edges in \(
\Gamma_{W(E_6)}(\{p_1,\ldots,p_{15}\}) \) is verified exactly like in the previous section, but again using nontrivial \( \omega \).

Suppose that \(G\) has a \(6\)-coloring, where
\(p_1,p_2,p_3,p_4,p_5,p_6\) have colors \(A,B,C,D,E,F\), respectively. By the
definition of \(L_i\), the color of \(p_i\), for \(i\geq7\), must be
the color of some \(p_j\) with \(j\in L_i\). In particular, \(p_7\)
has color \(A\) or \(F\).

If \(p_7\) has color \(A\), the edges
\(p_7p_8,p_8p_9,p_9p_{10},p_{10}p_{11}\) successively force
\(p_8,p_9,p_{10},p_{11}\) to have colors \(B,F,C,A\), contradicting
the edge \(p_{11}p_7\).

If \(p_7\) has color \(F\), the edges
\(p_7p_{12},p_{12}p_{13},p_{13}p_{14},p_{14}p_{15}\) successively
force \(p_{12},p_{13},p_{14},p_{15}\) to have colors \(B,A,E,F\),
contradicting the edge \(p_{15}p_7\).

Hence \(\chi(G)\geq7\). Equation~\eqref{eq:incompatibility-bound}
now gives
\[
 7\leq\chi(G)\leq\dim W(E_6).
\]
\end{proof}

\begingroup
\small
\bibliographystyle{plain}
\bibliography{references}
\endgroup

\end{document}